\newtheorem{theorem}{Theorem}[section]
\newtheorem{lemma}[theorem]{Lemma}
\newtheorem{fact}[theorem]{Fact}
\newtheorem{proposition}[theorem]{Proposition}
\newtheorem{case}{Case}[theorem]
\numberwithin{subcase}{case}
\numberwithin{subsubcase}{subcase}
\numberwithin{claim}{theorem}
\newenvironment{definition}[1][Definition]{\begin{trivlist}
\item[\hskip \labelsep {\bfseries #1}]}{\end{trivlist}}
\newenvironment{corollary-mannum}[2][Corollary]{\begin{trivlist}
\item[\hskip \labelsep {\bfseries #1}\hskip \labelsep {\bfseries #2}]}{\end{trivlist}}
\newenvironment{conjecture-mannum}[2][Conjecture]{\begin{trivlist}
\item[\hskip \labelsep {\bfseries #1}\hskip \labelsep {\bfseries #2}]}{\end{trivlist}}
\newcommand{\ds}{\displaystyle}
\title{\bf Boundary Convergence and Path Divergence Sets for Bounded Analytic Functions on the Disk}
\author{Trevor J. Richards}
\newcommand{\Addresses}{{
  \bigskip
  \footnotesize

  \noindent \textbf{T. J. Richards}\\
  \textsc{Department of Mathematics, Statistics, and Computer Science, Monmouth College\\
  Center for Science and Business Room 348\\
  Monmouth, Il 61462}\\
  \textit{E-mail address:} \texttt{trichards@monmouthcollege.edu}
}}
\begin{document}

\maketitle
\begin{center}\textbf{MSC 2010:} 30D40\\
\textbf{Keywords:} analytic disk functions, boundary behavior
\end{center}

\begin{abstract}
Let $f:\mathbb{D}\to\mathbb{C}$ be a bounded analytic function.  A set $K\subset\mathbb{D}$ which contains the point $1$ in its boundary is called a \textit{convergence set} for $f$ at $1$ if $f(z)$ converges to some value $\zeta$ as $z\to1$ with $z\in K$.  $K$ is called a \textit{path divergence set} for $f$ at $1$ if $f$ diverges along every path $\gamma$ which lies in $K$ and approaches $1$.  In this article, we show that for a path $\gamma$ through the unit disk from $-1$ to $1$, if $f$ fails to converge along $\gamma$, then either the region above $\gamma$ or the region below $\gamma$ is a path divergence set for $f$.  On the other hand, if $\gamma_1$ and $\gamma_2$ are two such paths, and $f$ converges along both $\gamma_1$ and $\gamma_2$, then the region between $\gamma_1$ and $\gamma_2$ is a convergence set for $f$.  This latter fact is immediate when $\gamma_1$ and $\gamma_2$ do not intersect except at their end-points, but becomes non-trivial when $\gamma_1$ and $\gamma_2$ are highly intersecting.  We conclude the paper with an examination of the convergence sets for the function $e^{\frac{z+1}{z-1}}$ at $1$.
\end{abstract}
\section{Introduction}\label{sect: Introduction.}%

Let $f:\mathbb{D}\to\mathbb{C}$ be analytic and bounded.  It is well known (see for example Theorem~5.2 in~\cite{C2}) that the non-tangential limit of $f$ exists at almost every point in $\partial\mathbb{D}$.  The sectorial limit theorem (see for example Theorem~5.4 in~\cite{C2}) states that if $f$ has a limit $\zeta$ along any path $\gamma$ in $\mathbb{D}$ which approaches some point $w\in\partial\mathbb{D}$, then the non-tangential limit of $f$ at $w$ exists and equals $\zeta$.  This article is concerned with local convergence properties of $f$ at individual points in $\partial\mathbb{D}$, so throughout we will assume that the point $w$ being approached is just the point $1$.  We will also assume for the sake of convenience that every path $\gamma$ mentioned in this article is a path in $\mathbb{D}$ from $-1$ to $1$.  That is, $\gamma:[0,1]\to\mathbb{C}$ with $\gamma(0)=-1$, $\gamma(1)=1$, and for all $s\in(0,1)$, $\gamma(s)\in\mathbb{D}$.  We will also use the symbol ``$\gamma$'' at times to denote the trace of $\gamma$ as a subset of the plane.

To say that $f$ has the non-tangential limit $\zeta$ at $1$ is to say that for any Stolz region $\mathcal{S}$ with vertex at $1$, $$\ds\lim_{z\to1,z\in\mathcal{S}}f(z)=\zeta.$$  In this article we generalize the notion of convergence or divergence in a Stolz region as follows.

\begin{definition}
Let $K\subset\mathbb{D}$ be a set, and assume that $1\in\partial K$.

\begin{itemize}
    \item If $\ds\lim_{z\to1,z\in K}f(z)$ exists, then we call $K$ a \textit{convergence set} for $f$ at $1$.
    \item If $\ds\lim_{s\to1^-}f(\gamma(s))$ does not exist for every path $\gamma\subset K$, then we call $K$ a \textit{path divergence set} for $f$ at $1$.
\end{itemize}

\end{definition}

We can now restate the sectorial limit theorem by saying that if there is a path $\gamma$ along which $f$ has a limit, then every Stolz region with vertex at $1$ is a convergence set for $f$ at $1$.  In Section~\ref{sect: Proof.}, we will prove the following theorem which guarantees the existence of convergence and path divergence sets for $f$ at $1$ in relation to paths along which $f$ converges or diverges respectively.  We must first make a definition.

\begin{definition}
Let $\gamma_1$, $\gamma_2$, and $\gamma_3$ be paths.

\begin{itemize}
\item By the \textit{region between $\gamma_1$ and $\gamma_2$} we mean the set of all points $z\in\mathbb{D}$ such that $z$ is in $\gamma_1$, $z$ is in $\gamma_2$, or $z$ is in a bounded component of $(\gamma_1\cup\gamma_2)^c$.
\item The component of $\mathbb{D}\setminus\gamma_3$ which contains $i$ in its boundary is called the \textit{region above $\gamma_3$}, and the component of $\mathbb{D}\setminus\gamma_3$ which contains $-i$ in its boundary is called the \textit{region below $\gamma_3$}.
\end{itemize}
\end{definition}

\begin{theorem}\label{thm: Domains from paths.}
Let $\gamma_1$, $\gamma_2$, and $\gamma_3$ be paths such that the limit of $f$ exists along $\gamma_1$ and along $\gamma_2$, and does not exist along $\gamma_3$.  Then the following hold.
\begin{itemize}
\item The region between $\gamma_1$ and $\gamma_2$ is a convergence set for $f$ at $1$.
\item Either the region above $\gamma_3$ or the region below $\gamma_3$ is a path divergence set for $f$ at $1$.
\end{itemize}
\end{theorem}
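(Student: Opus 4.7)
The plan is to establish the first bullet using a Riemann--mapping reduction, and then deduce the second bullet from the first by a short contradiction argument. For the first bullet, the sectorial limit theorem applied to each of $\gamma_1$ and $\gamma_2$ shows immediately that the common limit value must be the non-tangential limit $\zeta$ of $f$ at $1$, so it remains to prove that $f(z)\to\zeta$ as $z\to 1$ with $z$ in the region $R$ between $\gamma_1$ and $\gamma_2$.

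The heart of the proof is a conformal rescaling. After truncating $\gamma_1$ and $\gamma_2$ suitably near $1$ and, if necessary, joining them far from $1$ by an auxiliary Jordan arc inside $R$, I isolate a Jordan subdomain $D\subset R$ which contains all of $R$ in some neighborhood of $1$ and whose boundary near $1$ consists only of subarcs of $\gamma_1$ and $\gamma_2$. Let $\phi:\mathbb{D}\to D$ be a Riemann map with $\phi(1)=1$; by Carath\'eodory's theorem $\phi$ extends to a homeomorphism of closures, so the two boundary arcs of $D$ abutting $1$ pull back to arcs $\eta_1,\eta_2\subset\partial\mathbb{D}$ meeting at $1$. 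Set $F:=f\circ\phi\in H^\infty(\mathbb{D})$. For $e^{i\theta}\in\eta_1$, the point $\phi(e^{i\theta})$ lies on $\gamma_1\cap\mathbb{D}$, so continuity of $f$ on $\mathbb{D}$ combined with continuity of $\phi$ on $\overline{\mathbb{D}}$ shows that the radial limit of $F$ at $e^{i\theta}$ exists and equals $f(\phi(e^{i\theta}))$. As $e^{i\theta}\to 1$ along $\eta_1$, $\phi(e^{i\theta})\to 1$ along $\gamma_1$, and hence $F^*(e^{i\theta})\to\zeta$; the same argument on $\eta_2$ then shows the boundary function $F^*$ is continuous at $1$ with value $\zeta$. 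Since $F$ is the Poisson integral of its $L^\infty$ boundary values and the Poisson integral of a bounded function converges to its pointwise value at every point of continuity, we conclude $F(w)\to\zeta$ as $w\to 1$ in $\mathbb{D}$. Transferring via $\phi^{-1}$ gives $f(z)\to\zeta$ as $z\to 1$ in $D$, and combining with the assumed convergence on $\gamma_1\cup\gamma_2$ proves the first bullet.

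For the second bullet, suppose toward contradiction that neither the region above nor the region below $\gamma_3$ is a path divergence set for $f$ at $1$. Then each contains at least one path ($\alpha$ above, $\beta$ below) along which $f$ converges. The first bullet applied to $\alpha$ and $\beta$ shows that the region between them is a convergence set for $f$ at $1$. Since $\alpha$ and $\beta$ lie on opposite sides of $\gamma_3$ in $\mathbb{D}\setminus\gamma_3$ while the three paths share the endpoints $\pm 1$, the trace $\gamma_3((0,1))$ is trapped in a bounded component of $(\alpha\cup\beta)^c$ and so lies in the region between $\alpha$ and $\beta$. Convergence on this region would then force $f$ to converge along $\gamma_3$, contradicting the hypothesis.

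The delicate point I expect is the topological construction of the Jordan subdomain $D$ in the first bullet: if $\gamma_1$ and $\gamma_2$ intersect one another infinitely often as they accumulate at $1$, $R$ itself need not be simply connected, and carving out a Jordan subdomain whose boundary near $1$ consists only of pieces of $\gamma_1\cup\gamma_2$ will likely require a careful choice of truncation point together with a small cutoff argument. Once that reduction is in hand, the Poisson integral step and the second-bullet contradiction are essentially routine.
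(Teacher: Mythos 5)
Your overall architecture --- the sectorial limit theorem to identify the common limit, a Riemann map plus a boundary-to-interior convergence principle for the first bullet, and the short contradiction argument for the second bullet --- matches the paper, and your Poisson-integral step is a legitimate substitute for the cluster-set theorem (Fact~\ref{fact: Boundary convergence in disk and circle.}) that the paper invokes. But the step you yourself flag as ``delicate'' is a genuine gap, and it cannot be repaired in the form you propose. The region $R$ between $\gamma_1$ and $\gamma_2$ is by definition the union of the two traces with \emph{all} bounded components of $(\gamma_1\cup\gamma_2)^c$. If $\gamma_1$ and $\gamma_2$ cross each other infinitely often as they approach $1$ (which the hypotheses permit --- the paths are merely continuous), these bounded components form an infinite collection of disjoint open ``bubbles'' accumulating at $1$, and every neighborhood of $1$ meets infinitely many of them. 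A Jordan subdomain $D\subset R$ is open and connected, hence contained in a single component of the interior of $R$, i.e.\ essentially in a single bubble; neither $D$ nor its closure can contain $R\cap B(1;\epsilon)$ for any $\epsilon>0$. So no choice of truncation point or cutoff arc produces the single Jordan domain your argument needs, and the first bullet is not proved.

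The paper's proof shows what is actually required: one must argue component by component. A separate combinatorial lemma (Lemma~\ref{lem: Path faces.}) shows that only finitely many components of $(\gamma_1\cup\gamma_2)^c$ can stretch from a small circle about $1$ out to a larger one. On each of these finitely many ``large'' components the Riemann-map argument goes through (using the Carath\'eodory--Torhorst extension from Lemma~\ref{lem: Faces of gamma.}, since these components need not be Jordan domains; your Poisson-integral argument would work equally well here), while the possibly infinitely many ``small'' components trapped near $1$ are disposed of by the maximum modulus principle, because their entire boundaries already lie where $|f-\zeta|<\epsilon$. That finiteness-plus-maximum-modulus decomposition is the missing idea in your write-up. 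Your deduction of the second bullet from the first is the same as the paper's and is fine once the first bullet is established.
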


In Section~\ref{sect: Example.}, we will examine the convergence sets for the function $g(z)=e^{\frac{z+1}{z-1}}$ at $1$.  Although no open disk $D\subset\mathbb{D}$ which is tangent to the unit circle at $1$ can be a convergence set for $g$ at $1$, we will find a convergence set for $g$ at $1$ with smooth boundary close to $1$ that is tangent to the unit circle at $1$.
\section{Lemmas and the Proof of Theorem~\ref{thm: Domains from paths.}}\label{sect: Proof.}

In our proof of Theorem~\ref{thm: Domains from paths.}, we will use a result having to do with the cluster sets of a disk function, so we make the following definition.

\begin{definition}
Let $h:\mathbb{D}\to\hat{\mathbb{C}}$ be a meromorphic function such that $h$ extends continuously to each point in $\partial\mathbb{D}$ in a neighborhood of $1$ except possibly at $1$.

\begin{itemize}
\item The \textit{cluster set of $h$ at $1$}, denoted $\mathcal{C}(h,1)$, is the set of all values $\zeta\in\hat{\mathbb{C}}$ such that there is some sequence of points $\{z_n\}$ contained in the disk such that $z_n\to1$ and $h(z_n)\to\zeta$.
\item The \textit{boundary cluster set of $h$ at $1$}, denoted $\mathcal{C}_B(h,1)$, is the set of all values $\zeta\in\hat{\mathbb{C}}$ such that there is some sequence of points $\{z_n\}$ contained in the unit circle such that $z_n\to1$ and $h(z_n)\to\zeta$.
\end{itemize}
\end{definition}

We will use the following fact regarding these cluster sets (which appears as Theorem~5.2 in~\cite{CL}), which says that the boundary of the cluster set for a meromorphic disk function $h$ at $1$ is contained in the boundary cluster set of $h$ at $1$.

\begin{fact}\label{fact: Boundary convergence in disk and circle.}
Let $h:\mathbb{D}\to\hat{\mathbb{C}}$ be a meromorphic function such that $h$ extends continuously to each point in $\partial\mathbb{D}$ in a neighborhood of $1$ except possibly at $1$.  Then $$\partial\mathcal{C}(h,1)\subset\mathcal{C}_B(h,1).$$
\end{fact}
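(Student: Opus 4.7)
The approach is a proof by contradiction, using Nevanlinna's two-constants (harmonic measure) theorem to leverage quantitative control of $h$ on an arc of $\partial\mathbb{D}$ into quantitative control along sequences in $\mathbb{D}$ approaching $1$. Suppose for contradiction that $\zeta\in\partial\mathcal{C}(h,1)\setminus\mathcal{C}_B(h,1)$. After post-composing with a M\"obius transformation (e.g.\ $h\mapsto 1/h$ when $\zeta=\infty$), we may assume $\zeta\in\mathbb{C}$.

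Two quantitative harvests set the stage. Since $\mathcal{C}_B(h,1)$ is closed and misses $\zeta$, there exist $\rho>0$ and an arc $A\subset\partial\mathbb{D}$ containing $1$ on which the continuous boundary extension satisfies $|h(w)-\zeta|\geq\rho$ for all $w\in A\setminus\{1\}$. Since $\zeta\in\partial\mathcal{C}(h,1)$ and $\mathcal{C}(h,1)$ is closed, we may pick $\zeta^*\notin\mathcal{C}(h,1)$ with $|\zeta-\zeta^*|<\rho/2$, and then obtain $r,\delta>0$ with $|h(z)-\zeta^*|\geq r$ for all $z\in\Omega_\delta:=\mathbb{D}\cap B(1,\delta)$. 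Shrinking $\delta$, we arrange that the outer arc $\gamma_\delta:=\partial\mathbb{D}\cap\overline{B(1,\delta)}$ is contained in $A$, and we write $\alpha_\delta:=\partial B(1,\delta)\cap\overline{\mathbb{D}}$ for the inner circular arc of $\partial\Omega_\delta$.

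Set $g:=1/(h-\zeta^*)$. This function is holomorphic on $\Omega_\delta$ (with zeros precisely at the poles of $h$), globally bounded by $1/r$, and on $\gamma_\delta\setminus\{1\}$ the reverse triangle inequality upgrades this to $|g|\leq 2/\rho$. The core of the proof is to apply the two-constants theorem to the subharmonic function $u:=\log|g|$ on $\Omega_\delta$, excluding the single boundary point $1$; boundedness of $u$ from above, via $|g|\leq 1/r$, lets a Lindel\"of-type extension absorb this exceptional point and yields
$$u(z)\leq\omega(z,\gamma_\delta;\Omega_\delta)\log(2/\rho)+\omega(z,\alpha_\delta;\Omega_\delta)\log(1/r).$$

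Finally, choose a sequence $z_n\in\mathbb{D}$ with $z_n\to 1$ and $h(z_n)\to\zeta$, which exists because $\zeta\in\mathcal{C}(h,1)$. Since $1$ is a regular boundary point of $\Omega_\delta$ lying in the relative interior of $\gamma_\delta$, one has $\omega(z_n,\alpha_\delta;\Omega_\delta)\to 0$, so $\limsup_n u(z_n)\leq\log(2/\rho)$; but $u(z_n)\to\log(1/|\zeta-\zeta^*|)$, forcing $|\zeta-\zeta^*|\geq\rho/2$, which contradicts our choice of $\zeta^*$. The main obstacle is exactly this harmonic-measure balancing: the global interior bound $1/r$ is useless on its own (since $r$ must be smaller than $|\zeta-\zeta^*|<\rho/2$), and only the two-constants refinement lets the sharper arc bound $2/\rho$ govern the behavior along sequences approaching $1$.
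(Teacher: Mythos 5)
The paper offers no proof of this statement at all: it is imported verbatim as Theorem~5.2 of Collingwood--Lohwater \cite{CL}, so there is no internal argument to compare yours against. Judged on its own, your two-constants proof is correct, and it is essentially the classical argument for this result. The reductions are sound: closedness of $\mathcal{C}_B(h,1)$ together with $\zeta\notin\mathcal{C}_B(h,1)$ yields the uniform bound $|h-\zeta|\geq\rho$ on a punctured arc; $\zeta\in\partial\mathcal{C}(h,1)$ yields an omitted value $\zeta^*$ with $|\zeta-\zeta^*|<\rho/2$ and a bound $|h-\zeta^*|\geq r$ on $\Omega_\delta$; hence $g=1/(h-\zeta^*)$ is holomorphic and bounded by $1/r$ on $\Omega_\delta$, with the sharper bound $2/\rho$ on $\gamma_\delta\setminus\{1\}$ by the reverse triangle inequality. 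The application of the two-constants theorem with the single exceptional boundary point $1$ is legitimate precisely because a point has zero harmonic measure on the boundary of the Jordan domain $\Omega_\delta$ and $\log|g|$ is globally bounded above there, and the endgame ($\omega(z_n,\alpha_\delta;\Omega_\delta)\to 0$ since $1$ is a regular boundary point interior to $\gamma_\delta$, forcing $|\zeta-\zeta^*|\geq\rho/2$) is a genuine contradiction. Two points deserve an explicit sentence each, though neither is a gap: (i) since $h$ maps into $\hat{\mathbb{C}}$ and may take the value $\infty$ on the arc $A$, the bound you extract from $\zeta\notin\mathcal{C}_B(h,1)$ is naturally a spherical one, and you should say why it converts to the Euclidean bound $|h(w)-\zeta|\geq\rho$ once $\zeta$ is arranged to be finite; (ii) at the poles of $h$ inside $\Omega_\delta$ the function $g$ has removable singularities with value $0$, which is what justifies calling it holomorphic there. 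What your route buys over the bare citation is a self-contained, quantitative proof; the cost is invoking harmonic measure and the regularity of boundary points of $\Omega_\delta$, machinery the paper avoids by outsourcing the statement to \cite{CL}.
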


We continue with several lemmas.



\begin{lemma}\label{lem: Convergence of preimage.}
Let $\gamma$ be a path.  Then $\ds\lim_{z\in\gamma,z\to1}\gamma^{-1}(z)=1$, in the sense that as $z\in\gamma$ approaches $1$, $\max(1-s:s\in\gamma^{-1}(z))\to0$.
\end{lemma}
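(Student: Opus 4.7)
The statement is essentially a compactness fact about continuous paths, and the plan is a straightforward proof by contradiction using the fact that $\gamma$ sends $[0,1)$ into $\mathbb{D}$ and $\gamma(1)=1$.

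My plan is to suppose, toward a contradiction, that the conclusion fails. Then there exist $\varepsilon>0$, a sequence $z_n\in\gamma$ with $z_n\to 1$, and preimages $s_n\in\gamma^{-1}(z_n)$ such that $1-s_n\ge\varepsilon$ for every $n$; equivalently $s_n\in[0,1-\varepsilon]$. The key observation is that the interval $[0,1-\varepsilon]$ is compact.

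From here I would extract a convergent subsequence. By Bolzano--Weierstrass, after passing to a subsequence we may assume $s_n\to s^*$ for some $s^*\in[0,1-\varepsilon]$. Continuity of $\gamma$ forces $\gamma(s_n)\to\gamma(s^*)$, but by construction $\gamma(s_n)=z_n\to 1$, so $\gamma(s^*)=1$. This is the contradiction I want: by the standing convention for paths in the paper, $\gamma(0)=-1\neq 1$ and $\gamma(s)\in\mathbb{D}$ for every $s\in(0,1)$, so $\gamma$ takes the value $1$ only at $s=1$; since $s^*\le 1-\varepsilon<1$, the equality $\gamma(s^*)=1$ is impossible. This completes the argument.

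There is no real obstacle here; the only mildly delicate point is interpreting the statement correctly. The quantity $\max\{1-s:s\in\gamma^{-1}(z)\}$ is well defined because $\gamma^{-1}(z)$ is a closed subset of $[0,1]$ (by continuity of $\gamma$), and what needs to go to zero is the \emph{largest} such $1-s$, i.e.\ the \emph{smallest} preimage. The argument above handles that automatically, since the contradiction is obtained from \emph{any} choice of preimage $s_n\in\gamma^{-1}(z_n)$ satisfying $1-s_n\ge\varepsilon$, and in particular from the largest one. No hypothesis beyond continuity of $\gamma$ and the endpoint conventions is used, which matches the minimal assumptions on $\gamma$ in the paper.
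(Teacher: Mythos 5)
Your proof is correct and is exactly the ``basic compactness and continuity'' argument that the paper invokes without writing out: extract a convergent subsequence of preimages bounded away from $1$, and contradict the fact that $\gamma(s)=1$ only at $s=1$ under the paper's path conventions. You have simply supplied the details the paper omits, so there is nothing further to add.
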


\begin{proof}
This follows from basic compactness and continuity properties.
\end{proof}

\begin{lemma}\label{lem: Path faces.}
Let $E_1$ and $E_2$ be disjoint circles in $\mathbb{C}$, and let $\gamma:[0,1]\to\mathbb{C}$ be any path.  Then only finitely many of the components of $\gamma^c$ can intersect both $E_1$ and $E_2$.
\end{lemma}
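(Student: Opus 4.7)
The plan is to derive the lemma from a classical theorem in plane topology about Peano continua, applied to $\Gamma := \gamma([0,1])$.

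First I would verify that $\Gamma$ is a \emph{Peano continuum} (compact, connected, and locally connected). Compactness and connectedness are immediate from continuity. Local connectedness follows because $\gamma \colon [0,1] \to \Gamma$, being a continuous surjection from a compact Hausdorff space to a Hausdorff space, is closed and hence a quotient map; quotient maps preserve local connectedness of the (locally connected) domain $[0,1]$. I would then invoke the following classical result (see, e.g., Kuratowski's \emph{Topology}, Vol.~II): if $K \subset \mathbb{R}^2$ is a Peano continuum, then for every $\eta > 0$ only finitely many components of $\mathbb{R}^2 \setminus K$ have diameter at least $\eta$. The geometric intuition is that local connectedness of $K$ prevents complementary components from accumulating in diameter.

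Applied to $K = \Gamma$ with $\eta = d(E_1,E_2) > 0$ (positive because $E_1$ and $E_2$ are disjoint compact circles), the lemma follows at once. If a component $U$ of $\gamma^c$ meets both $E_1$ and $E_2$, pick $p \in U \cap E_1$ and $q \in U \cap E_2$; then $\mathrm{diam}(U) \ge |p - q| \ge d(E_1,E_2) = \eta$, so $U$ is one of the finitely many bounded components of diameter at least $\eta$. Adding in the (at most one) unbounded component of $\gamma^c$ still leaves a finite total.

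The main potential obstacle is invoking the planar-topology theorem in exactly the form needed. If a clean reference proves awkward, I would instead prove the required instance directly by contradiction: assume infinitely many components $U_n$ of $\gamma^c$ each meet both $E_1$ and $E_2$, and extract $p_n \in U_n \cap E_1$ converging to some $p^* \in E_1$. Then $p^*$ must lie on $\Gamma$, for otherwise $p^*$ lies in some component $V$ of the open set $\gamma^c$, and $p_n \in V$ for all large $n$, forcing $V = U_n$ for infinitely many $n$ and contradicting distinctness. Local connectedness of $\Gamma$ at $p^*$ would then produce a connected neighborhood of $p^*$ in $\Gamma$ forcing all but finitely many nearby $U_n$ to lie in a single component of $\gamma^c$, again contradicting distinctness.
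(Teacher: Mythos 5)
Your argument is correct, but it takes a genuinely different route from the paper. You reduce the lemma to the classical fact (Whyburn, \emph{Analytic Topology}, Ch.~VI; also in Kuratowski) that a locally connected continuum in the sphere has, for each $\eta>0$, only finitely many complementary components of diameter at least $\eta$; since $\Gamma=\gamma([0,1])$ is a Peano continuum by Hahn--Mazurkiewicz (or your quotient-map argument, which is also fine) and any component meeting both circles has diameter at least $d(E_1,E_2)>0$, the conclusion is immediate. The paper instead gives a self-contained, elementary proof: it normalizes $E_1,E_2$ to concentric circles, runs a crossing path $\varphi_j$ through each of infinitely many alleged components, uses the cyclic ordering of these disjoint crossings to build pairwise disjoint ``gap regions'' $G_j$, and then uses uniform continuity of $\gamma$ to show that $\gamma$ must spend a time interval of length at least some fixed $\delta$ traversing each $G_j$, contradicting the finite length of $[0,1]$. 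Your approach is shorter and more general (it works for any Peano continuum, not just path images) at the cost of importing a nontrivial classical theorem, which is consistent in spirit with the paper's later use of Hahn--Mazurkiewicz and Carathéodory--Torhorst; the paper's approach buys self-containedness. One caution: your fallback sketch at the end is not yet a proof --- knowing that $\Gamma$ has a small connected neighborhood of $p^*$ does not by itself force the nearby components $U_n$ to coalesce (that implication is essentially the content of the Whyburn theorem and requires \emph{uniform} local connectedness plus a separation argument), so if the citation were unavailable you would need a genuinely more careful argument, such as the one in the paper.
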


\begin{proof}
This fact follows from the uniform continuity of the path $\gamma:[0,1]\to\mathbb{C}$.
\end{proof}

\begin{lemma}\label{lem: Faces of gamma.}
Let $\gamma:[0,1]\to\mathbb{C}$ be a path, and let $F$ be a bounded component of $\gamma^c$.  Then for any Riemann map $\tau:\mathbb{T}\to F$, $\tau$ extends to a homeomorphism $\tau:cl(\mathbb{D})\to cl(F)$.
\end{lemma}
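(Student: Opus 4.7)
The strategy is to invoke Carath\'eodory's boundary extension theorem, which states that a Riemann map between simply connected proper domains extends to a homeomorphism of their closures if and only if the target domain's boundary is a Jordan curve. Hence the task reduces to showing that $\partial F$ is a Jordan curve.

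First I would verify the preliminary structural facts. Since $\gamma([0,1])$ is a continuous image of $[0,1]$, it is a Peano continuum in $\hat{\mathbb{C}}$. Being a complementary component of a compact connected subset of $\hat{\mathbb{C}}$, the bounded region $F$ is simply connected (a standard consequence of Alexander duality), so the Riemann map $\tau$ exists in the first place. Moreover, by the Torhorst theorem (which states that the boundary of each complementary component of a locally connected continuum in the sphere is itself a locally connected continuum), $\partial F$ is compact, connected, and locally connected. By the first half of Carath\'eodory's theorem, this already delivers a continuous surjective extension $\bar\tau:\overline{\mathbb{D}}\to\overline{F}$.

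To upgrade $\bar\tau$ to a homeomorphism, I would rule out pinch points on $\partial F$: points $w\in\partial F$ admitting two distinct prime-end accesses from $F$, equivalently, points such that $F\cap B(w,\varepsilon)$ has at least two components for all sufficiently small $\varepsilon>0$. Arguing by contradiction, assume such a $w$ exists. The plan is to choose two concentric circles $E_1,E_2$ centered near $w$ with $E_2$ inside $E_1$ and $w$ strictly between them, then show that the distinct local components of $F\cap B(w,\varepsilon)$, separated near $w$ by arcs of $\gamma$ that necessarily cluster at $w$, can be refined into \emph{infinitely many} distinct components of $\gamma^c$, each meeting both $E_1$ and $E_2$. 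This contradicts Lemma~\ref{lem: Path faces.}. Once pinch points are excluded, $\partial F$ is a Jordan curve, and the full Carath\'eodory theorem delivers the desired homeomorphism $\overline{\mathbb{D}}\to\overline{F}$.

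The main obstacle I anticipate is precisely this last step: extracting infinitely many components of $\gamma^c$ from a single pinch point requires more than just the local bifurcation of $F$ at $w$. In particular, one needs to combine the multi-access geometry at $w$ with the specific path structure of $\gamma$---perhaps by iteratively locating disjoint ``spurs'' of $\gamma$ that penetrate deeper and deeper between the distinct accesses to $w$---in order to produce the accumulation of complementary components that Lemma~\ref{lem: Path faces.} forbids. If this combinatorial-topological step proves too delicate, a fall-back approach would be to parameterize $\partial F$ directly by selecting sub-arcs of $\gamma$ that form a cyclic tour of $\partial F$ and then invoking Lemma~\ref{lem: Path faces.} to show that this tour can be made simple after finitely many prunings.
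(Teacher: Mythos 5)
Your opening reduction is the right thing to worry about: a Riemann map onto $F$ extends to a \emph{homeomorphism} of the closures precisely when $\partial F$ is a Jordan curve, whereas local connectivity of $\hat{\mathbb{C}}\setminus F$ (via Hahn--Mazurkiewicz/Torhorst) only yields a continuous surjection $\overline{\mathbb{D}}\to\overline{F}$. Up to that point your argument is sound, and it recovers essentially everything the paper's own proof establishes (the paper shows $\mathbb{C}\setminus F$ is locally connected and then cites Carath\'eodory--Torhorst in one line). The genuine gap is the step you yourself flag: excluding pinch points. Your plan is to manufacture infinitely many components of $\gamma^c$ meeting two concentric circles near a pinch point $w$ and contradict Lemma~\ref{lem: Path faces.}, but a pinch point need not be flanked by \emph{any} component of $\gamma^c$ other than $F$ itself. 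If $\gamma$ backtracks---say it runs from $-1$ to $0$, traverses the full circle $|z-\frac14|=\frac14$, runs in along the segment $[0,\frac14]$ and back out, and then proceeds to $1$ avoiding that closed disk---then the unique bounded component is the slit disk $F=\{|z-\frac14|<\frac14\}\setminus(0,\frac14]$. Every interior point of the slit is a pinch point whose two local accesses both belong to $F$; the only thing separating them is $\gamma$ itself, and Lemma~\ref{lem: Path faces.} is not violated since there is exactly one bounded complementary component. So no refinement of your construction can produce the forbidden infinite family, and for this $F$ the continuous extension genuinely fails to be injective: $\overline{F}$ is not homeomorphic to $\overline{\mathbb{D}}$.

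The conclusion to draw is that the injectivity half of the statement cannot be extracted from Lemma~\ref{lem: Path faces.} or from local connectivity alone; some further hypothesis on how $\gamma$ traverses $\partial F$ is needed, and your fall-back of pruning a cyclic tour of $\partial F$ runs into the same slit obstruction. For what it is worth, the paper's own proof conceals exactly this issue by deriving a homeomorphism where the cited theorem delivers only a continuous extension; what the later argument of the paper actually uses is the continuous extension together with control of $f\circ\tau$ near $\tau^{-1}(1)$. So: the first half of your proposal is correct and matches the substance of the paper's argument, but the second half has a real gap that, as formulated, cannot be closed.
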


\begin{proof}
Let $E$ denote the union of every component of $\gamma^c$ other than $F$.  Since $E$ is open, $E$ is locally connected.  By the Hahn--Mazurkiewicz theorem (see for example Theorem 3-30 in~\cite{HY}) since $\gamma$ is the continuous image of the unit interval, $\gamma$ is locally connected.  Thus $\mathbb{C}\setminus F$ is the disjoint union of two locally connected sets, and is thus locally connected.

Now the Caratheodory--Torhorst theorem (see for example Theorem~5.5 in~\cite{C2}) gives that any Riemann map $\tau:\mathbb{D}\to F$ extends to a homeomorphism from the closure of the unit disk to the closure of $F$.
\end{proof}

\begin{lemma}\label{lem: Convergence in path.}
Let $\gamma$ be a path.  If $\ds\lim_{s\to1^-}f(\gamma(s))=0$, then $\ds\lim_{z\to1,z\in\gamma}f(z)=0$.
\end{lemma}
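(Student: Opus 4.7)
The plan is to reduce the statement about convergence as $z \in \gamma$ approaches $1$ (viewing $\gamma$ as a subset of the plane) to the hypothesized convergence as the parameter $s \to 1^-$, using Lemma~\ref{lem: Convergence of preimage.} to control the preimage. The only real content is that even though a single point $z \in \gamma$ may be the image of many different parameter values, all of those parameter values must be close to $1$ whenever $z$ is close to $1$.

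More concretely, I would proceed as follows. Fix $\varepsilon > 0$. By the hypothesis $\lim_{s \to 1^-} f(\gamma(s)) = 0$, there exists $\eta \in (0,1)$ such that $|f(\gamma(s))| < \varepsilon$ whenever $s \in (1-\eta, 1)$. Now apply Lemma~\ref{lem: Convergence of preimage.}: it provides a $\delta > 0$ such that, for every $z \in \gamma$ with $|z-1| < \delta$, every $s \in \gamma^{-1}(z)$ satisfies $1 - s < \eta$, i.e.\ $s \in (1-\eta, 1]$. (Since $\gamma(1) = 1$, the value $s = 1$ is excluded once $z \neq 1$, and the point $z = 1$ itself is irrelevant for the limit.) Combining these two statements: any $z \in \gamma$ with $0 < |z - 1| < \delta$ satisfies $z = \gamma(s)$ for some $s \in (1-\eta, 1)$, hence $|f(z)| = |f(\gamma(s))| < \varepsilon$. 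This is exactly the statement $\lim_{z \to 1, z \in \gamma} f(z) = 0$.

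There is no real obstacle here; the lemma is essentially an unpacking of Lemma~\ref{lem: Convergence of preimage.} together with the definition of the limit. The one subtlety worth flagging in the write-up is precisely the distinction between $\gamma$ as a parametrized map and $\gamma$ as its trace in $\mathbb{C}$, which is why the elementary-looking Lemma~\ref{lem: Convergence of preimage.} is needed at all: without it, one could imagine a point $z$ near $1$ on the trace which is also hit by $\gamma(s)$ for some $s$ bounded away from $1$, and the hypothesis would not directly constrain $f(z)$ at such points.
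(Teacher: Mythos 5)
Your proof is correct and follows essentially the same route as the paper's: fix $\varepsilon$, use the hypothesis to control $f\circ\gamma$ on a parameter interval $(1-\eta,1)$, and then use compactness to ensure that every preimage of a point $z$ near $1$ lies in that interval (the paper does this directly by taking $\iota=\min_{s\in[0,1-\eta]}|\gamma(s)-1|$, whereas you invoke Lemma~\ref{lem: Convergence of preimage.}, which encapsulates the same compactness argument). No gaps.
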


\begin{proof}
Fix an $\epsilon>0$, and choose a $\delta>0$ small enough that for every $s\in(1-\delta,1)$, $|f(\gamma(s))|<\epsilon$.  Define $\iota=\min_{s\in[0,1-\delta]}(|\gamma(s)-1|)$.  Then for any $z\in\gamma$ with $|z-1|<\iota$, we must have $\gamma^{-1}(z)\subset(1-\delta,1)$, and thus $|f(z)|<\epsilon$.  We conclude that $\ds\lim_{z\to1,z\in\gamma}f(z)=0$.
\end{proof}

\begin{proof}[Proof of Theorem~\ref{thm: Domains from paths.}.]
Let $\gamma_1,\gamma_2,\gamma_3:[0,1]\to\mathbb{C}$ be paths such that for each $i$, $\gamma_i(0)=-1$, $\gamma_i(1)=1$, and for each $s\in(0,1)$, $\gamma_i(s)\in\mathbb{D}$.

Let $\{z_n\}_{n=1}^\infty$ be a sequence of points contained in the region between $\gamma_1$ and $\gamma_2$ (this set is defined in the Introduction).  The sectorial limit theorem immediately implies that the limit of $f$ along both $\gamma_1$ and $\gamma_2$ is some common value.  Subtracting this value from $f$, we assume that this common limit equals $0$.  We wish to show that $f(z_n)\to0$ as $n\to\infty$.  Partition the natural numbers two sets $A$ and $B$ by the rule that $n\in A$ if $z_n$ is in either $\gamma_1$ or $\gamma_2$, and $n\in B$ if $z_n$ is contained in a bounded component of $(\gamma_1\cup\gamma_2)^c$.  It is immediately clear from Lemma~\ref{lem: Convergence of preimage.} that $\ds\lim_{n\to\infty,n\in A}f(z_n)=0$.  It remains to show that $\ds\lim_{n\to\infty,n\in B}f(z_n)=0$.

Fix an $\epsilon>0$.  Choose an $\iota_1>0$ such that for all $s\in(1-\iota_1,1)$, $|f(\gamma_1(s))|<\epsilon$ and $|f(\gamma_2(s))|<\epsilon$.  Define $\delta_1=\min_{s\in[0,1-\iota_1]}(|\gamma_1(s)-1|,|\gamma_2(s)-1|)$.  By Lemma~\ref{lem: Path faces.}, there are only finitely many components of $(\gamma_1\cup\gamma_2)^c$ that intersect both the domain $\{z:|z-1|>\delta_1\}$ and the disk $\{z:|z-1|<\delta_1/2\}$.  We wish to avoid those components of $(\gamma_1\cup\gamma_2)^c$ which intersect $\{z:\{|z-1|>\delta_1$, but which do not contain the point $1$ in their boundary.  Therefore we choose a $\delta_2\in(0,\delta_1/2)$ small enough so that every bounded component of $(\gamma_1\cup\gamma_2)^c$ which intersects the disk $\{z:|z-1|<\delta_2\}$ is either contained in the disk $\{z:|z-1|<\delta_1\}$, or contains $1$ in its boundary.

Let $F$ be some component of $(\gamma_1\cup\gamma_2)^c$ which intersects $\{z:|z-1|<\delta_2\}$.

\begin{case}
$F$ is contained in the disk $\{z:|z-1|<\delta_1\}$.
\end{case}

By choice of $\delta_1$, for all $z\in\partial F\cap\mathbb{D}$, $|f(z)|<\epsilon$.  Compactness and continuity considerations show that if $\partial F\cap\partial\mathbb{D}\neq\emptyset$, then $\partial F\cap\partial\mathbb{D}=\{1\}$, and work in the previous case shows that $\ds\lim_{z\to1,z\in F}f(z)=0$.  Thus we have that for all $w\in\partial F$, either $|f(w)|<\epsilon$ or $\ds\lim_{z\to w,z\in F}f(z)=0$.  The maximum modulus principle now implies that for all $z\in F$, $|f(z)|<\epsilon$.

\begin{case}\label{case: F is not contained in iota_1 ball.}
$F$ is not contained in the disk $\{z:|z-1|<\delta_1\}$.
\end{case}

By choice of $\delta_2$, the point $1$ is contained in $\partial F$.  Let $\tau:\mathbb{D}\to F$ be some Riemann map for $F$.  Lemma~\ref{lem: Faces of gamma.} implies that $\tau$ extends to a homeomorphism $\tau:cl(\mathbb{D})\to cl(F)$.  Thus we may adopt the normalization $\tau(1)=1$.  Since $f$ is analytic on $\partial F\setminus\{1\}$, it thus follows that $f\circ\tau$ extends continuously to every point on $\partial\mathbb{D}$ except possibly to $1$.  Moreover, as $\theta\to0$, $\tau(e^{i\theta})$ approaches $1$ in $\partial F$ (which is in turn contained in $\gamma_1\cup\gamma_2$), so that $f\circ\tau(e^{i\theta})$ approaches $0$ (by Lemma~\ref{lem: Convergence in path.}).

Therefore $\mathcal{C}_B(f\circ\tau,1)$ consists of the single point $0$ only.  Since $f\circ\tau$ is bounded in the disk, compactness considerations imply that $\mathcal{C}(f\circ\tau,1)$ is non-empty.   Fact~\ref{fact: Boundary convergence in disk and circle.} now immediately implies that $\mathcal{C}(f\circ\tau,1)=\{0\}$, and thus that $f\circ\tau(w)\to0$ as $w\to0$ in the disk.  Finally we conclude that $f(z)\to0$ as $z\to1$ in $F$.

Since $f(z)$ converges to $0$ as $z$ approaches $1$ in each of the finitely many components of $(\gamma_1\cup\gamma_2)^c$ which intersect both $B(1;\delta_1)^c$ and $B(1;\delta_2)$, and $|f(z)|<\epsilon$ for all $z$ in the components of $(\gamma_1\cup\gamma_2)^c$ which are contained entirely in $B(1;\delta_1)$, we can choose an $\delta_3\in(0,\delta_2)$ small enough so that for all $z$ contained in a bounded face of $(\gamma_1\cup\gamma_2)^c$, if $|z-1|<\delta_3$, then $|f(z)|<\epsilon$.  We conclude that $\ds\lim_{n\to\infty,n\in B}f(z_n)=0$, concluding the proof of the first item of the theorem.

In order to prove the second item, suppose by way of contradiction that neither the region above $\gamma_3$ nor the region below $\gamma_3$ is a domain of path divergence.  Then there are two paths $\psi_1$ and $\psi_2$ along which $f$ converges, such that $\psi_1$ lies above $\gamma_3$ and $\psi_2$ lies below $\gamma_3$.  By the first item of the theorem, the region between $\psi_1$ and $\psi_2$ is a convergence set for $f$ at $1$, but $\gamma_3$ lies in the region between $\psi_1$ and $\psi_2$, providing us with the desired contradiction.

\end{proof}

\section{Convergence Sets for $g(z)=e^{\frac{z+1}{z-1}}$}\label{sect: Example.}%

In this section we will explore the convergence sets of the function $g(z)=e^{\frac{z+1}{z-1}}$ at $1$.  The path divergence sets for $g$ at $1$ are also of interest, but as there do not appear to be concise geometric characterizations of these sets, we will restrict our attention to the convergence sets of $g$ at $1$.  We begin with a definition.

\begin{definition}\ 
\begin{itemize}
    \item For any $-\infty\leq p<q\leq\infty$, define $H_{Re}(p,q)=\{w\in\mathbb{C}:p<Re(w)<q\}$.
    \item For any $-\infty\leq p<q\leq\infty$, define $H_{Im}(p,q)=\{w\in\mathbb{C}:p<Im(w)<q\}$.
\end{itemize}
\end{definition}

Let $R:\hat{\mathbb{C}}\to\hat{\mathbb{C}}$ denote the M\"obius transformation $R(z)=\frac{z+1}{z-1}$.  $R$ is a conformal map from the unit disk $\mathbb{D}$ to the half plane $H_{Re}(-\infty,0)$.  Moreover if $\{z_n\}$ is any sequence of points in $\mathbb{D}$, $z_n\to 1$ if and only if $w_n=R(z_n)\to\infty$ in $H_{Re}(-\infty,0)$.  Thus in order to study the convergence sets of $g$ at $1$, it suffices to study the convergence sets in $H_{Re}(-\infty,0)$ of the function $h(w)=e^w$ at $\infty$, and we will treat the two settings as interchangable in what follows.

It is easy to see that as $w\to\infty$ in $H_{Re}(-\infty,0)$ in the real line, $h(w)\to0$.  Therefore by the sectorial limit theorem, if $\psi$ is any path in $H_{Re}(-\infty,0)$ approaching $\infty$, and $h$ converges along $\psi$, then $h$ must converge to $0$ along $\psi$ (and therefore back in the disk, $g(z)$ converges to $0$ as $z\to1$ in any Stolz region with vertex at $1$).

It is possible to construct a convergence set for $h$ at $\infty$ in which $h$ approaches any fixed value $\zeta\in\mathbb{D}$.  Indeed, for any such $\zeta$, there is a sequence of points in the $H_{Re}(-\infty,0)$ (namely $\{w_n=\ln(|\zeta|)+i(\operatorname{Arg}(\zeta)+2\pi n\}$) such that $w_n\to\infty$ and $h(w_n)=\zeta$.  This sequence is itself a convergence set for $h$ at $\infty$.  However since $g(z)\to0$ as $z\to1$ in any Stolz region in the unit disk with vertex at $1$, it is of more interest to us to look for large regions $G\subset H_{Re}(-\infty,0)$ (equivalently $K=R^{-1}(G)\subset\mathbb{D}$) such that $h(w)\to0$ as $w\to\infty$ in $G$ (equivalently $g(z)\to0$ as $z\to1$ in $K$).

Note that for $w=x+iy\in H_{Re}(-\infty,0)$, $|h(w)|=e^x$.  Therefore $h(w)\to0$ in $H_{Re}(-\infty,0)$ if any only if $x\to-\infty$.  Thus an unbounded set $G\subset H_{Re}(-\infty,0)$ is a convergence set for $h$ at $\infty$ in which $h$ approaches $0$ if and only if, for every number $p\in(-\infty,0)$, the set $G\cap H_{Re}(p,0)$ is bounded.  Since the region to the left of a verticle line in $H_{Re}(-\infty,0)$ is mapped by $R^{-1}$ to the interior of a circle contained in the unit disk which is tangent to the unit circle at $1$, we may translate the above observation back to the disk by saying that a set $K\subset\mathbb{D}$ containing $1$ in its boundary is a convergence set for $g$ in which $g$ approaches $0$ if and only if, for any disk $D$ contained in $\mathbb{D}$ and tangent to the unit circle at $1$, $K$ is eventually contained in $D$.

\begin{proposition}\label{prop: Convergence set for g.}
Let $K\subset\mathbb{D}$ be a set with $1\in\partial K$.  $K$ is a convergence set for $g$ at $1$ in which $g$ approaches $0$ if and only for every disk $D\subset\mathbb{D}$ which is tangent to the unit circle at $1$, there is an $\epsilon>0$ such that $K\cap B(1;\epsilon)\subset D$.
\end{proposition}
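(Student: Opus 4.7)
The plan is to use the change of variables already set up in the section: the Möbius transformation $R(z)=\frac{z+1}{z-1}$ conformally identifies $\mathbb{D}$ with the left half-plane $H_{Re}(-\infty,0)$, carries $z=1$ to $\infty$, and transforms $g$ into $h(w)=e^w$, whose modulus depends only on $\mathrm{Re}(w)$. Under $R$, every disk $D\subset\mathbb{D}$ tangent to $\partial\mathbb{D}$ at $1$ is the preimage of a half-plane of the form $H_{Re}(-\infty,p)$ for some $p<0$, and conversely every such half-plane pulls back to such a tangent disk. This correspondence is the geometric heart of the proposition and will be invoked without computation.

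I will prove both implications by translating everything through $R$. For the forward direction, assume $K$ is a convergence set for $g$ at $1$ with $g(z)\to 0$. Fix a disk $D$ tangent to $\partial\mathbb{D}$ at $1$, and let $p<0$ be the corresponding constant so that $R(D)=H_{Re}(-\infty,p)$. If no $\epsilon>0$ works, one can extract a sequence $z_n\in K\setminus D$ with $z_n\to 1$; then $R(z_n)\to\infty$ while $\mathrm{Re}(R(z_n))\ge p$, so $|g(z_n)|=e^{\mathrm{Re}(R(z_n))}\ge e^p>0$, contradicting $g(z_n)\to 0$.

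For the reverse direction, suppose that for every tangent disk $D$ there is an $\epsilon>0$ with $K\cap B(1;\epsilon)\subset D$. Let $z_n\in K$ with $z_n\to 1$, and fix $\delta>0$. Choose $p<0$ with $e^p<\delta$ and let $D_p=R^{-1}(H_{Re}(-\infty,p))$, which is tangent to $\partial\mathbb{D}$ at $1$. By hypothesis, pick $\epsilon>0$ with $K\cap B(1;\epsilon)\subset D_p$. For all sufficiently large $n$, $z_n\in K\cap B(1;\epsilon)$, hence $\mathrm{Re}(R(z_n))<p$ and $|g(z_n)|<e^p<\delta$. Thus $g(z_n)\to 0$, and $K$ is a convergence set for $g$ at $1$ on which $g$ approaches $0$.

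There is no substantive obstacle here; the only point that deserves care is the geometric identification of vertical half-planes in $H_{Re}(-\infty,0)$ with open disks internally tangent to $\partial\mathbb{D}$ at $1$ under $R^{-1}$. Once that correspondence is stated, the proposition reduces to the observation that for $w\in H_{Re}(-\infty,0)$ we have $|h(w)|=e^{\mathrm{Re}(w)}$, so controlling $|g|$ near $1$ is exactly controlling how far $R(z)$ lies to the left of the imaginary axis, which is exactly what the tangent-disk containment condition measures.
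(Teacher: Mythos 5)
Your proof is correct and follows essentially the same route as the paper: the paper also derives this proposition by transporting the problem through $R$ to the left half-plane, noting $|h(w)|=e^{\mathrm{Re}(w)}$, and using the fact that $R^{-1}$ carries half-planes $H_{Re}(-\infty,p)$ to disks internally tangent to $\partial\mathbb{D}$ at $1$. You have merely written out the two implications in sequence form, which the paper leaves as an immediate translation.
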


Note that by the same reasoning as above, for any $p\in(-\infty,0)$, $H_{Re}(p,0)$ is itself a path divergence set for $h$ at $\infty$, and thus for any disk $D$ strictly contained in $\mathbb{D}$ which is tangent to the unit circle at $1$, the region $\mathbb{D}\setminus D$ is a path divergence set for $g$ at $1$.

While no single disk contained in $\mathbb{D}$ and tanget to the unit circle at $1$ is a convergence set for $g$ at $1$, in the next subsection we will give a concrete example of a region $K\subset\mathbb{D}$ meeting the requirements of Proposition~\ref{prop: Convergence set for g.} which has smooth boundary at $1$ and is tangent to the unit circle at $1$.

\subsection{Convergence Set for $g$ at $1$ with Verticle Tangent Line at $1$}

We will finish by considering a specific convergence set for $g$ at $1$.  First let us translate the problem to the upper half plane $H_{Im}(0,\infty)$ via the M\"obius transformation $S:\hat{\mathbb{C}}\to\hat{\mathbb{C}}$ defined by $S(z)=\frac{-iz+i}{z+1}$.

$S$ transforms any disk contained in $\mathbb{D}$ which is tangent to the unit circle at $1$ to a disk contained in $H_{Im}(0,\infty)$ which is tangent to the real line at $0$.  Let $G\subset H_{Im}(0,\infty)$ be defined by $$G=\left\{x+iy:y>|x|^\frac{3}{2}\right\}.$$

$G$ is the region above the graph $y=T(x)=|x|^\frac{3}{2}$.  $T(0)=0$ and $T'(0)=0$, so $y=T(x)$ has the real line as its tangent line at $x=0$.  Fix for the moment some small $s>0$, and compare $y=T(x)$ to the graph $y=U_s(x)=s-\sqrt{s^2-x^2}$ (ie. the bottom half of the circle with radius $s$ centered at the point $(0,s)$).  An easy calculation shows that for sufficiently small positive $x$-values, $T'(x)>{U_s}'(x)$, so that the graph $y=T(x)$ lies above the half circle.

\begin{figure}[h]
  \centering
  \includegraphics[width=0.9\textwidth]{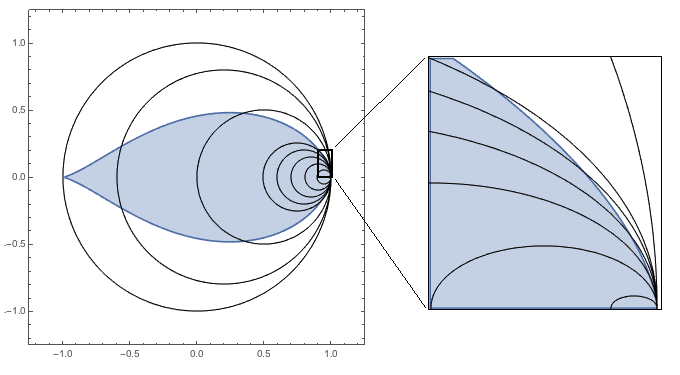}
  \caption{The convergence set $K$.}
  \label{fig: Convergence set for g.}
\end{figure}

Therefore defining $K=S^{-1}(G)$ (depicted in Figure~\ref{fig: Convergence set for g.}), we now have that $K$ is tangent to the unit circle at $1$, and for any circle $C$ in the disk which is tangent to the unit circle at $1$, the restriction of $K$ to some small neighborhood of $1$ is contained in the bounded face of $C$.  By Proposition~\ref{prop: Convergence set for g.}, $K$ is a domain of convergence for $g$ at $1$ in which $g$ approaches $0$.

\bibliographystyle{plain}
\bibliography{refs}

\Addresses

\end{document}